\theoremstyle{definition}
\newtheorem{defn}{Definition}[section]
\numberwithin{defn}{section}
\theoremstyle{plain}
\newtheorem{thm}[defn]{Theorem}
\newtheorem{lem}[defn]{Lemma}
\newtheorem{prop}[defn]{Proposition}
\begin{document}

\title{Asymptotic Behaviour of the Conjugacy Probability of the Alternating Group%\thanks{Grants or other notes
%about the article that should go on the front page should be
%placed here. General acknowledgments should be placed at the end of the article.}
}
%\subtitle{Do you have a subtitle?\\ If so, write it here}

%\titlerunning{Short form of title}        % if too long for running head

\author{Misja F.A. Steinmetz         \and
        Madeleine L. Whybrow %etc.
}

%\authorrunning{Short form of author list} % if too long for running head

\date{22 December 2015}
% The correct dates will be entered by the editor

\maketitle

\begin{abstract}
For $G$ a finite group, $\kappa(G)$ is the probability that that $\sigma, \tau \in G$ are conjugate, when $\sigma$ and $\tau$ are chosen independently and uniformly at random. Recently, Blackburn \emph{et al} (2012) gave an elementary proof that $\kappa(S_n) \sim A/n^2$ as $n \to\infty$ for some constant $A$ - a result which was first proved by Flajolet \emph{et al} (2006).  In this paper, we extend the elementary methods of Blackburn \emph{et al} to show that $\kappa(A_n) \sim B/n^2$ as $n \to\infty$ for some constant $B$, given explicitly in this paper.
\end{abstract}

\section{Introduction}
\label{intro}
Let $G$ be a finite group and $\kappa(G)$ be the probability that $\sigma, \tau \in G$ are conjugate, when $\sigma$ and $\tau$ are chosen independently and uniformly at random. In words, $\kappa(G)$ is the probability that two randomly chosen elements from the group are conjugate. 

If $G$ is any group and $g_1,g_2,\dots,g_k$ is a complete set of representatives for the conjugacy classes of $G$, then it is easy to see that 

\[
\kappa(G)=\frac{1}{|G|^2}\sum_{i=1}^{k}|g_i^G|^2=\sum_{i=1}^{k}\frac{1}{|\mathrm{Cent}_G(g_i)|^2},
\]

where $\mathrm{Cent}_G(g)$ denotes the centralizer of an element $g\in G$. 

It was first proved by Flajolet \emph{et al} (2006) and later, using more elementary methods, by Blackburn \emph{et al} (2012) that $\kappa (S_n)\sim A/n^2$ as $n\to\infty$ for some constant $A$. In this paper, we extend the methods used by Blackburn \emph{et al} (2012, pp.~11-17) to show that $\kappa(A_n)\sim B/n^2$ as $n\to\infty$ for some constant $B$.

Although parts of this paper use similar methods to those of Blackburn \emph{et al}, some complications arise when treating the alternating groups instead of the symmetric groups. In particular, a conjugacy class of $S_n$ splits into two classes in $A_n$ if its corresponding cycle type consists of cycles of distinct odd lengths. 

In addressing this problem, we introduce the following definitions:

\begin{defn}\label{defn1.1}
Suppose $\sigma,\tau\in S_n$ are chosen independently and uniformly at random. Let
\begin{itemize}
	\item $\kappa_E(S_n)$ be the probability that $\sigma,\tau$ are conjugate, given that they are even permutations,
	\item$\kappa_O(S_n)$ be the probability that $\sigma,\tau$ are conjugate, given that they are odd permutations,
	\item $Q(S_n)$ be the probability that $\sigma,\tau$ have the same cycle type and they are only composed of cycles of distinct odd lengths.
\end{itemize}
\end{defn}

In the remainder of this paper we shall adopt the standard convention that $S_0$ equals the trivial group. One should note, however, that $\kappa_O(S_n)$ is not well defined for $n=0,1$, since with our convention neither $S_0$ nor $S_1$ contains any odd permutations. In order for Proposition \ref{prop1} in Section \ref{section1} to make sense, we define $\kappa_O(S_0):=1$ and $\kappa_O(S_1):=0.$ Please note that all of the three statistics given above are now well defined on $S_n$ for all $n\geq 0.$

We find that the asymptotic behavior of $\kappa(A_n)$ depends on the parity of $n$. We thus split our result into two cases, the first when the limit of $n^2 \kappa(A_n)$ is taken over the even integers and the second when it is taken over the odd integers. 

We now state our main result:

\begin{thm}
\label{thm1}

Using notation as defined above,

\[
\lim_{\substack{n \to\infty\\ n \textrm{ even}}} n^2 \kappa(A_n) = \sum_{\substack{d=0\\d \textrm{ even}}}^{\infty} \kappa_O(S_d) + \sum_{\substack{d=1\\ d \textrm{ odd}}}^{\infty} \left(\kappa_E(S_d)-2Q(S_d) \right)
\]
and
\[
\lim_{\substack{n \to\infty\\ n \textrm{ odd}}} n^2 \kappa(A_n) = \sum_{\substack{d=0\\d \textrm{ even}}}^{\infty} \left(\kappa_E(S_d) - 2Q(S_d)\right) + \sum_{\substack{d=1\\ d \textrm{ odd}}}^{\infty} \kappa_O(S_d).
\]
\end{thm}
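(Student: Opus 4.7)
The plan is to first express $\kappa(A_n)$ in terms of quantities on $S_n$ and then apply the long-cycle asymptotic method of Blackburn \emph{et al}.~\cite{john} to each piece separately.

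First I would recall the standard fact that an $S_n$-conjugacy class of even permutations either remains a single $A_n$-class or splits into two equal-sized $A_n$-classes, the splitting occurring precisely when the cycle type consists of distinct odd parts. In the non-splitting case $|\mathrm{Cent}_{A_n}(\sigma)| = |\mathrm{Cent}_{S_n}(\sigma)|/2$, and in the splitting case $|\mathrm{Cent}_{A_n}(\sigma)| = |\mathrm{Cent}_{S_n}(\sigma)|$. Summing $1/|\mathrm{Cent}_{A_n}(\sigma)|^2$ over $A_n$-class representatives and rearranging yields the identity
\[ \kappa(A_n) = \kappa_E(S_n) - 2 Q(S_n). \]
Thus it suffices to compute $\lim n^2 \kappa_E(S_n)$ and $\lim n^2 Q(S_n)$ along each parity of $n$.

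Next, I would apply the method of \cite{john}, splitting each sum according to the length $\ell$ of the largest cycle of $\sigma$. When $\ell > n/2$ the longest cycle is unique, and writing $\ell = n - d$ with $0 \le d < n/2$ the cycle type factors as $\lambda = (\ell, \lambda')$ for some $\lambda' \vdash d$, with $z_\lambda = \ell \cdot z_{\lambda'}$; the parity of $\sigma$ equals $(-1)^{\ell-1}$ times the parity of $\lambda'$. For each fixed $d$, the factor $(n/\ell)^2$ tends to $1$, and the inner sum over $\lambda' \vdash d$ of the required parity produces either $\kappa_E(S_d)/4$ or $\kappa_O(S_d)/4$, giving the $\kappa_E(S_n)$ contribution. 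A parallel analysis for $Q(S_n)$ uses that $\ell$ must be odd and that $\lambda'$ must consist of distinct odd parts (all automatically smaller than $\ell$, since $d < \ell$), producing $Q(S_d)$. The parity bookkeeping then yields both cases of the theorem: for $n$ even, $\ell$ has the same parity as $d$, so $d$ even forces an odd $\lambda'$ and contributes $\sum_{d\text{ even}} \kappa_O(S_d)$, while $d$ odd contributes $\sum_{d\text{ odd}} \kappa_E(S_d)$; meanwhile $Q(S_n)$ requires $\ell$ odd and hence $d$ odd, yielding $\sum_{d\text{ odd}} Q(S_d)$. Substituting into the identity from Step~1 gives the first formula, and the case $n$ odd is entirely symmetric.

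The main obstacle is the quantitative justification that this formal calculation is asymptotically exact: one must show both that the contribution from cycle types with no part exceeding $n/2$ is $o(1/n^2)$, and that the limit may be interchanged with the infinite sum over $d$, which requires uniform bounds in $d$ on the error $n^2 \cdot (\text{contribution of fixed } d) - (\text{target summand})$. This amounts to importing the tail and uniform-convergence estimates of \cite[pp.~14--21]{john}; since restricting to even permutations or to distinct-odd cycle types only removes a subset of the partitions bounded there, the estimates should transfer with only cosmetic changes, but verifying this carefully, together with checking that the limiting series themselves converge, is the most delicate part of the argument.
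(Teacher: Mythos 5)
Your proposal follows essentially the same route as the paper: the identity $\kappa(A_n) = \kappa_E(S_n) - 2Q(S_n)$ (which the paper derives via conditional probabilities on split classes rather than centralizer orders, but the two derivations are equivalent), followed by the longest-cycle asymptotic method of Blackburn \emph{et al.} applied separately to $\kappa_E(S_n)$ and $Q(S_n)$, with the same parity bookkeeping and the same limiting series $A_1 - 2B_1$ and $A_2 - 2B_2$. The quantitative steps you defer --- the $o(1/n^2)$ contribution from permutations with no cycle longer than $n/2$, the uniform $O(1/n^2)$ bounds on $\kappa_E$, $\kappa_O$ and $Q$ needed to interchange the limit with the sum over $d$, and the convergence of the limiting series --- are exactly what the paper carries out in its Sections \ref{section1}--\ref{section4}, so your outline is correct and matches the paper's argument.
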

The remainder of this paper concerns the proof of this result. In Section \ref{section1} we prove that for $\kappa_E(S_n), \kappa_O(S_n)$ and $Q(S_n)$ we can find inequalities iteratively relating their value to sums over values of these statistics at $S_k$ for smaller $k.$

We then use these inequalities in Section \ref{bound} to find constants $C_0,C_1$ and $C_2$ such that for all $n\in\mathbb{N}$ $$\kappa_E(S_n)\leq \frac{C_0}{n^2},\kappa_O(S_n)\leq\frac{C_1}{n^2}\text{ and } Q(S_n)\leq\frac{C_2}{n^2}.$$ In other words, we will establish uniform bounds on our probabilities in this section.

In Section \ref{section4} we find expressions for the limits $$\lim_{n\to\infty}n^2\kappa_E(S_n)\text{ and }\lim_{n\to\infty}n^2 Q(S_n).$$

Finally in Section \ref{section5} we show that we can use our results for $\kappa_E(S_n), \kappa_O(S_n)$ and $Q(S_n)$ to understand the asymptotic behaviour of $\kappa(A_n)$ and prove Theorem \ref{thm1}.

\section{Inequalities on $\kappa_{E}(S_n)$ and $Q(S_n)$}\label{section1}

In this section we will obtain analogous results to Proposition 7.1 of Blackburn \emph{et al} (2012, p.~13) for $\kappa_E(S_n)$ and $Q(S_n)$. We first state the following well known lemma found in Blackburn \emph{et al} (2012).

\begin{lem}
\label{prelem1}
Let $n \in \mathbb{N}$ and let $1 \leq l \leq n$. Let $X \in \Omega_n$ be an $l$-set. If $\sigma$ is chosen uniformly and at random from $S_n$ then
\begin{itemize}
\item the probability that $\sigma$ acts as an $l$-cycle on $X$ is $\frac{1}{l}\binom{n}{l}^{-1}$;
\item the expected number of $l$-cycles in $\sigma$ is $1/l$;
\item the probability that 1 is contained in an $l$-cycle is 1/n.
\end{itemize}
\end{lem}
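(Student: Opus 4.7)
The plan is to establish the three bullet points in order, since the latter two follow readily from the first by linearity of expectation and a symmetry argument. The key observation is that if $\sigma$ acts as an $l$-cycle on the set $X$, then in particular $\sigma$ preserves $X$ setwise, so $\sigma$ is completely determined by the independent data of $\sigma|_X$ and $\sigma|_{\{1,\ldots,n\}\setminus X}$.

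For the first bullet, I would simply count the permutations $\sigma \in S_n$ which act as an $l$-cycle on the fixed $l$-set $X$. By the remark above, such a $\sigma$ is specified by choosing one of the $(l-1)!$ $l$-cycles on $X$, together with an arbitrary permutation of the $n-l$ remaining points, contributing $(n-l)!$ further choices. Dividing $(l-1)!(n-l)!$ by $n!$ and rewriting in terms of $\binom{n}{l}$ gives the stated probability $\frac{1}{l}\binom{n}{l}^{-1}$.

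For the second bullet, let $Y_l$ denote the number of $l$-cycles of $\sigma$. Then $Y_l$ equals the number of $l$-subsets $X$ on which $\sigma$ acts as an $l$-cycle, so by the first bullet and linearity of expectation, $\mathbb{E}[Y_l] = \binom{n}{l} \cdot \frac{1}{l}\binom{n}{l}^{-1} = \frac{1}{l}$. For the third bullet, note that the total number of points of $\{1,\ldots,n\}$ which lie in some $l$-cycle of $\sigma$ is exactly $l \cdot Y_l$, so its expectation equals $1$. Since the uniform distribution on $S_n$ is invariant under conjugation by any transposition, each point of $\{1,\ldots,n\}$ has the same probability of lying in an $l$-cycle; this common probability is therefore $1/n$.

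There is no serious obstacle in this argument; the only point worth flagging is that the third bullet is most cleanly obtained via the symmetry observation rather than via a direct count over $l$-subsets containing $1$, although the latter approach would of course also work and yield the same numerical answer.
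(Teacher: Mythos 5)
Your proof is correct: the count $(l-1)!(n-l)!$ for the first bullet, linearity of expectation for the second, and the symmetry argument for the third are all sound and complete. The paper itself gives no proof of this lemma (it is quoted as a well-known result from the cited reference), and your argument is the standard one that the reference uses, so there is nothing to reconcile.
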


First we will turn our attention to $\kappa_E(S_n)$. We write $s_k(n)$ for the probability that a permutation of $\Omega_n$, chosen at random, has only cycles of length strictly less that $k$.

\begin{prop}\label{prop1}
For all $n \in \mathbb{N}$ we have 
\[
\kappa_E(S_n) \leq s_k(n)^2+\sum_{\substack{l=k \\ l \textrm{ even}}}^{n} \frac{\kappa_O(S_{n-l})}{l^2} + \sum_{\substack{l=k \\ l \textrm{ odd}}}^{n} \frac{\kappa_E(S_{n-l})}{l^2}.
\]
Moreover, if $k$ is such that $n/2 < k \leq n$ then 
\[
\kappa_E(S_n) \geq \sum_{\substack{l=k \\ l \textrm{ even}}}^{n} \frac{\kappa_O(S_{n-l})}{l^2} + \sum_{\substack{l=k \\ l \textrm{ odd}}}^{n} \frac{\kappa_E(S_{n-l})}{l^2}.
\]
\end{prop}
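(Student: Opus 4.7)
The plan is to mirror the proof of Blackburn \emph{et al}'s Proposition 7.1 in \cite{john}, organizing the sum over conjugacy classes by the length of the longest cycle while tracking parity. I would first express $\kappa_E(S_n)$ as a (suitably normalized) sum of $1/z_\lambda^2$ over partitions $\lambda$ of $n$ whose cycle type gives an even permutation, where $z_\lambda$ is the centralizer order. Splitting this sum according to the largest part $l$ of $\lambda$, the contribution with $l<k$ is handled by the elementary inequality $\sum a_\lambda^2 \leq \left(\sum a_\lambda\right)^2$, producing the term $s_k(n)^2$ after enlarging the range of summation to all partitions of $n$ with largest part $<k$.

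For each $l \geq k$, I would use the bijection $\lambda \mapsto \lambda' := \lambda \setminus \{l\}$, matching partitions of $n$ with largest part exactly $l$ to partitions of $n-l$ with largest part at most $l$. The centralizer identity $z_\lambda = l \cdot m_l(\lambda) \cdot z_{\lambda'}$, together with $m_l(\lambda) \geq 1$, gives $1/z_\lambda^2 \leq 1/(l^2 z_{\lambda'}^2)$. The key parity observation is that an $l$-cycle is a product of $l-1$ transpositions, so removing one $l$-cycle flips parity precisely when $l$ is even. Hence even partitions of $n$ with largest part $l$ correspond to odd partitions of $n-l$ when $l$ is even, and to even partitions of $n-l$ when $l$ is odd. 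Summing over all $\lambda'$ of the appropriate parity then produces $\kappa_O(S_{n-l})/l^2$ for $l$ even and $\kappa_E(S_{n-l})/l^2$ for $l$ odd, completing the upper bound.

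For the lower bound under the hypothesis $n/2 < k \leq n$, any $l \geq k$ satisfies $l > n/2$, so $m_l(\lambda) = 1$ is forced and every partition of $n - l$ automatically has all parts strictly less than $l$. Both inequalities from the previous paragraph therefore become equalities on each term with $l \geq k$, and discarding the non-negative contribution from partitions with largest part $< k$ yields the claimed lower bound. The main technical point, and where this argument departs from \cite{john}, will be the parity bookkeeping in the bijection $\lambda \mapsto \lambda'$: one must check carefully that the factors $\kappa_E(S_{n-l})$ and $\kappa_O(S_{n-l})$ pair up with the correct parities of $l$. The remaining steps are routine manipulations of the centralizer identity and Cauchy--Schwarz-type inequalities, and I anticipate that the proof of the corresponding bound for $Q(S_n)$ in the same section will run by an almost identical argument, with the distinct-odd-cycle restriction simply narrowing the set of partitions $\lambda'$ one sums over.
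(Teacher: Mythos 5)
Your argument is correct in substance but runs along a genuinely different track from the paper's. The paper works probabilistically: for each pair of $l$-sets $X,Y$ it introduces the event $E(X,Y)$ that $\sigma$ and $\tau$ act as $l$-cycles on $X$ and $Y$ with restrictions of equal cycle type, computes $\mathbf{P}(E(X,Y))$ from Lemma \ref{prelem1}, and obtains the upper bound from subadditivity over all $X,Y$ and $l\ge k$, and the lower bound from the fact that the events with $|X|,|Y|>n/2$ are pairwise disjoint. You instead work with the exact formula $\kappa_E(S_n)=4\sum_{\lambda}z_\lambda^{-2}$ (sum over partitions of $n$ of even sign), stratify by the largest part $l$, and use the centralizer recursion $z_\lambda=l\,m_l(\lambda)\,z_{\lambda'}$: the inequality $m_l(\lambda)\ge 1$ together with enlarging the range of $\lambda'$ gives the upper bound, while $m_l(\lambda)=1$ being forced for $l>n/2$ (and every partition of $n-l$ then automatically having parts $<l$) turns each term into an equality and gives the lower bound. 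The parity bookkeeping --- an $l$-cycle has sign $(-1)^{l-1}$, so deleting it flips parity exactly when $l$ is even --- is the same observation the paper makes when it notes that $\overline{\sigma}$ must be odd when $l$ is even. Your version makes the source of each inequality more explicit and avoids any appeal to the conditional distribution of the restrictions; the paper's version has the advantage that Lemma \ref{prelem1} and the events $E(X,Y)$ are reused essentially verbatim for $Q(S_n)$ in Proposition \ref{prop2}, exactly as you predict your argument would be.

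Two constants deserve a word in your write-up, though neither is a real gap. First, your Cauchy--Schwarz step bounds the short-cycle contribution by $4\bigl(\sum_{\lambda\ \mathrm{short}}z_\lambda^{-1}\bigr)^2=4s_k(n)^2$ rather than $s_k(n)^2$, since what is really needed is the probability that a random \emph{even} permutation is $k$-short; the paper's proof silently makes the same identification, and the discrepancy is harmless because only the fact that $n^2s_k(n)^2\to 0$ is ever used downstream. Second, the normalization $\kappa_E(S_d)=4\sum_{\lambda}z_\lambda^{-2}$ (and likewise for $\kappa_O$) holds only for $d\ge 2$, so the boundary terms $l=n-1$ and $l=n$ of your sums need a one-line separate check.
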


\begin{proof}

Let $\sigma$ and $\tau$ be even permutations of $\Omega_n$ chosen independently and uniformly at random. Let $X$ and $Y$ be two $l$-sets and write $\overline{X}$ and $\overline{Y}$ for their respective complements in $\Omega_n$. 

Let $E(X,Y)$ be the event that $\sigma$ acts as an $l$-cycle on $X$, $\tau$ acts as an $l$-cycle on $Y$ and that $\overline{\sigma}$ and $\overline{\tau}$, the respective restrictions of $\sigma$ and $\tau$ to $\overline{X}$ and $\overline{Y}$, have the same cycle structure. 

If $l$ is even then $\overline{\sigma}$ and $\overline{\tau}$ must be odd so the probability that they are of the same cycle type is equal to $\kappa_O(S_{n-l})$. If $l$ is odd, the probability is $\kappa_E(S_{n-l})$. So, from Lemma \ref{prelem1}, we have 
\[
\mathbf{P}(E(X,Y)) = 
	\begin{cases}
		\binom{n}{l}^{-2}\frac{\kappa_O(S_n)}{l^2}, & l \text{ even}; \\
		\binom{n}{l}^{-2}\frac{\kappa_E(S_n)}{l^2}, & l \text{ odd}.
	\end{cases}	
\]
If $\sigma$ and $\tau$ are conjugate in $S_n$ then either $\sigma$ and $\tau$ both contain only cycles of length strictly less than $k$, or there exists sets $X$ and $Y$ of cardinality $l \geq k$ on which $\sigma$ and $\tau$ act as $l$-cycles and such that the restrictions $\overline{\sigma}$ and $\overline{\tau}$ have the same cycle type. Therefore
\begin{align*}
\kappa_E(S_n) &\leq s_k(n)^2 +\sum_{\substack{l=k \\ l \textrm{ even}}}^{n} \sum_{|X|=l} \sum_{|Y|=l} \mathbf{P}(E(X,Y))+\sum_{\substack{l=k \\ l \textrm{ odd}}}^{n} \sum_{|X|=l} \sum_{|Y|=l} \mathbf{P}(E(X,Y))\\
&=s_k(n)^2+\sum_{\substack{l=k \\ l \textrm{ even}}}^{n} \sum_{|X|=l} \sum_{|Y|=l} \binom{n}{l}^{-2}\frac{\kappa_O(S_{n-l})}{l^2}\\ &\hspace{3cm}+ \sum_{\substack{l=k \\ l \textrm{ odd}}}^{n} \sum_{|X|=l} \sum_{|Y|=l} \binom{n}{l}^{-2}\frac{\kappa_E(S_{n-l})}{l^2}\\
&=s_k(n)^2+\sum_{\substack{l=k \\ l \textrm{ even}}}^{n} \frac{\kappa_O(S_{n-l})}{l^2}+ \sum_{\substack{l=k \\ l \textrm{ odd}}}^{n} \frac{\kappa_E(S_{n-l})}{l^2}.
\end{align*}
Here the events $E(X,Y)$ are not necessarily disjoint so we have only an upper bound, not an equality. We have established the first inequality of the proposition.

When $k >\frac{n}{2}$ the events $E(X,Y)$ with $|X|,|Y| \geq k$ are disjoint, since a permutation of length $n$ can only contain at most one cycle of length greater than $\frac{n}{2}$. Thus
\begin{align*}
\kappa_E(S_{n}) 
&=s_k(n)^2+\sum_{\substack{l=k \\ l \textrm{ even}}}^{n} \frac{\kappa_O(S_{n-l})}{l^2} + \sum_{\substack{l=k \\ l \textrm{ odd}}}^{n} \frac{\kappa_E(S_{n-l})}{l^2}\\
&\geq\sum_{\substack{l=k \\ l \textrm{ even}}}^{n} \frac{\kappa_O(S_{n-l})}{l^2} + \sum_{\substack{l=k \\ l \textrm{ odd}}}^{n} \frac{\kappa_E(S_{n-l})}{l^2},
\end{align*}
as required. 
\end{proof}

We now take a closer look at $Q(S_n)$. Recall that $Q(S_n)$ is the probability that two elements of $S_n$, chosen independently and uniformly at random, have the same cycle type and consist of cycles of distinct odd lengths. 

\begin{prop}\label{prop2}

For all  $n \in \mathbb{N}$ we have $$Q(S_n)\leq s_k(n)^2 + \sum_{\substack{l=k \\ l \mathrm{\;odd}}}^{n} \frac{Q(S_{n-l})}{l^2}.$$
Moreover, if $k$ is such that $\frac{n}{2}<k\leq n$, then $$Q(S_n) \geq \sum_{\substack{l=k\\l \mathrm{\;odd}}}^{n} \frac{Q(S_{n-l})}{l^2}.$$

\end{prop}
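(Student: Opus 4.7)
The plan is to mirror the two-sided argument used for Proposition \ref{prop1}, adapted to accommodate the extra condition that all cycle lengths be odd and distinct. Let $\sigma, \tau \in S_n$ be chosen independently and uniformly at random. For each odd $l$ with $k \leq l \leq n$ and each pair of $l$-subsets $X, Y \subseteq \Omega_n$, I would define the event $F(X,Y)$ that $\sigma$ acts as an $l$-cycle on $X$, $\tau$ acts as an $l$-cycle on $Y$, and the restrictions $\overline{\sigma}, \overline{\tau}$ to $\overline{X}, \overline{Y}$ have the same cycle type consisting of cycles of distinct odd lengths. By Lemma \ref{prelem1} together with the independence of $\sigma$ and $\tau$, this event has probability $\frac{1}{l^2}\binom{n}{l}^{-2} Q(S_{n-l})$, and summing over the $\binom{n}{l}^2$ choices of $(X,Y)$ contributes $Q(S_{n-l})/l^2$ for each odd $l \geq k$.

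For the upper bound, I would argue that if $\sigma$ and $\tau$ have the same cycle type and it consists of distinct odd cycles, then either every cycle of each has length strictly less than $k$ (an event of probability at most $s_k(n)^2$ by independence), or there exist an odd $l \geq k$ and $l$-sets $X, Y$ for which $F(X,Y)$ occurs. A union bound over the possibly overlapping events $F(X,Y)$ then produces the claimed inequality.

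For the lower bound under the hypothesis $k > n/2$, any permutation of $\Omega_n$ has at most one cycle of length greater than $n/2$, so the events $F(X,Y)$ are pairwise disjoint as $l, X, Y$ vary. Their probabilities therefore sum to a lower bound for $Q(S_n)$, which is exactly the right-hand side of the claimed inequality.

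The point I expect to check most carefully is whether the event counted by $Q(S_{n-l})$ on the restriction is compatible with the global distinctness requirement: a priori the restriction could itself contain a cycle of length $l$, which would spoil the distinctness of cycle lengths in $\sigma$ or $\tau$. In the upper bound this is harmless, because $Q(S_{n-l})$ merely overcounts and only strengthens the inequality. In the lower bound it cannot arise, because $l > n/2$ forces $n - l < l$, so no cycle of length $l$ can fit inside the restriction. Once this subtlety is dispatched, the two estimates follow as direct analogues of the corresponding steps in the proof of Proposition \ref{prop1}.
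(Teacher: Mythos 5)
Your proposal is correct and follows essentially the same route as the paper: the same events $F(X,Y)$, the same probability computation via Lemma \ref{prelem1}, a union bound for the upper estimate and disjointness when $k>n/2$ for the lower one. You even flag the same subtlety the paper notes, namely that a further $l$-cycle in the restriction is harmlessly overcounted in the upper bound and is impossible in the lower bound since $n-l<l$.
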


\begin{proof}
Let $l$ be odd and let $\sigma,\tau \in S_n$. We define $X,Y,\overline{X},\overline{Y},\overline{\sigma}$ and $\overline{\tau}$ as in the proof of Proposition \ref{prop1}.

Let $F(X,Y)$ be defined as the event that $\sigma$ acts as an $l-$cycle on $X$, $\tau$ acts as an $l-$cycle on $Y$, $\overline{\sigma}$ and $\overline{\tau}$ have the same cycle type and $\overline{\sigma}$ and $\overline{\tau}$ only have cycles of distinct odd length. Given that $\sigma$ and $\tau$ act as $l-$cycles on $X$ and $Y$, $\overline{\sigma}$ and $\overline{\tau}$ are independently and uniformly distributed over the symmetric groups of $\overline{X}$ and $\overline{Y}$. Hence the probability that $\overline{\sigma}$ and $\overline{\tau}$ have the same cycle type and $\overline{\sigma}$ and $\overline{\tau}$ only have cycles of distinct odd length is exactly $Q(S_{n-l})$.  So, from Lemma \ref{prelem1}, $$\mathbf{P}(F(X,Y))=\binom{n}{l}^{-2}\frac{Q(S_{n-l})}{l^2}.$$

If $\sigma$ and $\tau$ have the same cycle type and only parts of distinct odd lengths, then either both have only cycles of length strictly less than $k$, or there exist $l-$sets $X$ and $Y$ for some $l\geq k$ such that $\sigma$ acts on $X$ as an $l-$cycle and $\tau$ acts on $Y$ as an $l-$cycle and such that the restrictions $\overline{\sigma}$ and $\overline{\tau}$ have the same cycle type and only parts of distinct odd lengths. Therefore
\begin{align*}
Q(S_n)
& \leq s_k(n)^2 + \sum_{\substack{l=k\\ l \textrm{ odd}}}^{n} \sum_{|X|=l}\sum_{|Y|=l} \mathbf{P}(F(X,Y)) \\
& = s_k(n)^2 + \sum_{\substack{l=k\\ l \textrm{ odd}}}^{n} \sum_{|X|=l}\sum_{|Y|=l}  \binom{n}{l}^{-2} \frac{Q(S_{n-l})}{l^2} \\
& = s_k(n)^2 + \sum_{\substack{l=k\\ l \textrm{ odd}}}^{n} \frac{Q(S_{n-l})}{l^2}.
\end{align*}
Here, again, the $F(X,Y)$ are not necessarily independent, meaning we do not have equality. The inequality also allows us to ignore the case where $\overline{\sigma}$ contains a further $l$-cycle. We have established the first inequality of the proposition. 

When $k>\frac{n}{2}$ the events $F(X,Y)$ with $|X|,|Y| \geq k$ are disjoint, since a permutation of length $n$ can only contain at most one cycle of length greater than $\frac{n}{2}$. What's more, $\overline{\sigma}$ may not contain an $l$-cycle. Thus
\begin{align*}
Q(S_n)
& = s_k(n)^2 + \sum_{\substack{l=k\\ l \textrm{ odd}}}^{n} \frac{Q(S_{n-l})}{l^2} \\
& \geq \sum_{\substack{l=k\\ l \textrm{ odd}}}^{n} \frac{Q(S_{n-l})}{l^2}.
\end{align*} 

\end{proof}

\section{Uniform Bounds on $\kappa_E(S_n)$, $\kappa_O(S_n)$ and $Q(S_n)$}\label{bound}

In this section we establish an analogous result to Theorem 1.4 in Blackburn \emph{et al} (2012, p.~3). We will prove that there exist $C_{0}$, $C_{1}$ and $C_{2}$ such that for all $n\in\mathbb{N}$ $$ \kappa_E(S_n) \leq \frac{C_{0}}{n^2},\: \kappa_O(S_n) \leq \frac{C_{1}}{n^2},\: Q(S_n) \leq \frac{C_{2}}{n^2}.$$
\\
We will need the following results, found in Blackburn \emph{et al} (2012, pp.~13--14).

\begin{prop}
\label{preprop1}
Let $k \in \mathbb{N}$ be such that $k \geq 2$. Suppose that there exists $n_0 \in \mathbb{N}$ such that 
\[
ns_k(n) \geq (n+1)s_k(n)(n+1) \textrm{ for } n \in \{n_0,n_0+1,\ldots,n_0+k-2\}.
\]
Then $ns_k(n) \geq (n+1)s_k(n+1)$ for all $n \geq n_0$.
\end{prop}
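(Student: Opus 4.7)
The plan is to turn the stated hypothesis into an inductive argument via a clean recurrence for $s_k(n)$. Conditioning on the length $l$ of the cycle containing the element $1$ and using Lemma \ref{prelem1}, one obtains for every $n \geq k-1$ the identity
\[
n\,s_k(n) = \sum_{l=1}^{k-1} s_k(n-l),
\]
since the probability that $1$ lies in a cycle of length $l$ is $1/n$, the remaining $n-l$ points form an independent uniformly random permutation, and only the values $l < k$ contribute.

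Applying this recurrence to both $n s_k(n)$ and $(n+1)s_k(n+1)$ and subtracting, the terms $s_k(n-1), s_k(n-2), \ldots, s_k(n-k+2)$ cancel, leaving the key identity
\[
n\,s_k(n) - (n+1)\,s_k(n+1) = s_k(n-k+1) - s_k(n).
\]
In particular, the inequality $n s_k(n) \geq (n+1) s_k(n+1)$ is equivalent to the monotonicity statement $s_k(n-k+1) \geq s_k(n)$. Moreover, whenever this inequality holds for an index $n$, one has $s_k(n+1) \leq \tfrac{n}{n+1} s_k(n) \leq s_k(n)$, so validity of the inequality on a block of consecutive indices forces $s_k$ itself to be non-increasing across that block.

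I would then run a strong induction on $n$ starting at $n_0$. The base case $n \in \{n_0, n_0+1, \ldots, n_0+k-2\}$ is precisely the hypothesis. For the inductive step at $n \geq n_0+k-1$, assuming the inequality has already been established for every $n' \in \{n_0, \ldots, n-1\}$, the monotonicity observation above yields the chain
\[
s_k(n-k+1) \geq s_k(n-k+2) \geq \cdots \geq s_k(n),
\]
and substituting back into the key identity gives $n s_k(n) - (n+1)s_k(n+1) = s_k(n-k+1) - s_k(n) \geq 0$, closing the induction.

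The proof is essentially mechanical once the recurrence is in place. The only thing to watch is bookkeeping: the window width $k-1$ of the recurrence is exactly why the hypothesis must be checked on $k-1$ consecutive integers, since each inductive step consumes precisely $k-1$ previous inequalities to produce the required non-increasing chain of length $k$.
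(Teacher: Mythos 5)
Your proof is correct: the recurrence $n\,s_k(n)=\sum_{l=1}^{k-1}s_k(n-l)$ obtained by conditioning on the cycle containing $1$, the telescoped identity $n\,s_k(n)-(n+1)\,s_k(n+1)=s_k(n-k+1)-s_k(n)$, and the strong induction (whose step at $n\ge n_0+k-1$ consumes exactly the $k-1$ previously established inequalities to get the non-increasing chain) all check out, and you have also correctly read the garbled right-hand side of the stated hypothesis as $(n+1)s_k(n+1)$. The paper itself gives no proof of this proposition, quoting it from \cite{john}, and the argument there is essentially the one you have reconstructed, so there is nothing further to compare.
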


\begin{lem}
\label{prelem2}
Let $n \in \mathbb{N}$ and let $0 <n<k/2$. Then 

\[
\sum_{l=\lceil n/2 \rceil}^{n-k-1} \frac{1}{l^2(n-l)^2}\leq \frac{1}{n^2k} + \frac{2 \log(n/k)}{n^3}.
\]
\end{lem}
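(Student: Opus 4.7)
The stated hypothesis $0 < n < k/2$ appears to be a typographical error for $0 < k < n/2$: otherwise the summation range is empty and the logarithmic term is negative, neither of which fits a useful tail estimate. Under the corrected hypothesis, $\lceil n/2\rceil \leq n-k-1$, so the sum is a genuine partial sum of $1/(l^2(n-l)^2)$ over the ``large half'' of $[1, n-1]$.

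The plan is to apply partial fractions. Since $\frac{1}{l(n-l)} = \frac{1}{n}\bigl(\frac{1}{l} + \frac{1}{n-l}\bigr)$, squaring yields the identity
\[
\frac{1}{l^2(n-l)^2} = \frac{1}{n^2}\left(\frac{1}{l^2} + \frac{1}{(n-l)^2}\right) + \frac{2}{n^3}\left(\frac{1}{l} + \frac{1}{n-l}\right).
\]
Summing over $l=\lceil n/2\rceil,\ldots,n-k-1$ splits the target sum into four pieces, and I would bound each by integral comparison. The two ``$n-l$'' sums simplify under the substitution $m=n-l$, which maps the range to $k+1 \leq m \leq \lfloor n/2\rfloor$: one gets $\sum_{m>k} 1/m^2 \leq 1/k$ and $\sum_{m=k+1}^{\lfloor n/2 \rfloor} 1/m \leq \log(n/(2k))$. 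These produce the two principal contributions $\frac{1}{n^2k}$ and $\frac{2\log(n/(2k))}{n^3}$.

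The main obstacle is to dispose of the remaining two sums, $\sum 1/l^2$ and $\sum 1/l$ over $l\in[\lceil n/2\rceil, n-k-1]$, both of which contribute $O(1/n^3)$ error that must fit inside the stated bound. Elementary estimates give $\sum 1/l^2 \leq 2/n$ (each term is at most $4/n^2$ and there are at most $n/2 - k$ terms) and $\sum 1/l \leq \log 2$ (by integral comparison, since $n-k-1 \leq 2(\lceil n/2\rceil - 1)$ whenever $k\geq 1$), with combined contribution bounded by $\frac{2}{n^3} + \frac{2\log 2}{n^3}$. The absorption is achieved via two pieces of slack: writing $\log(n/k) = \log 2 + \log(n/(2k))$ in the target bound frees up an extra $\frac{2\log 2}{n^3}$ on the right, covering the $\sum 1/l$ term; and sharpening $\sum_{m=k+1}^{\lfloor n/2\rfloor} 1/m^2 \leq \frac{1}{k} - \frac{2}{n}$ (rather than the loose $1/k$) yields a further $-\frac{2}{n^3}$ that absorbs the $\sum 1/l^2$ contribution. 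Adding the four bounds then gives the inequality as stated, with a small amount of room to spare.
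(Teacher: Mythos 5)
Your proposal is correct, but note that the paper itself offers no proof of this lemma to compare against: it is imported verbatim (typo included) from Blackburn \emph{et al.}, so your argument is effectively a reconstruction of the proof in the cited source. You are right that the hypothesis should read $0<k<n/2$ (the paper later applies the lemma with $k=15$ and $n>300$, despite the confusing phrase ``in the case $n=15$''). Your partial-fraction identity
\[
\frac{1}{l^2(n-l)^2}=\frac{1}{n^2}\left(\frac{1}{l^2}+\frac{1}{(n-l)^2}\right)+\frac{2}{n^3}\left(\frac{1}{l}+\frac{1}{n-l}\right)
\]
is exactly the standard route, and all four pieces check out: the substitution $m=n-l$ does map the range to $k+1\le m\le\lfloor n/2\rfloor$; the telescoping bound $\sum_{m=k+1}^{\lfloor n/2\rfloor}m^{-2}\le 1/k-1/\lfloor n/2\rfloor\le 1/k-2/n$ is valid; the crude bounds $\sum l^{-2}\le (4/n^2)(n/2-k)\le 2/n$ and $\sum l^{-1}\le\log 2$ (using $n-k-1\le 2(\lceil n/2\rceil-1)$ for $k\ge 1$) are valid; and the final tally
\[
\frac{1}{n^2k}-\frac{2}{n^3}+\frac{2\log(n/(2k))}{n^3}+\frac{2}{n^3}+\frac{2\log 2}{n^3}=\frac{1}{n^2k}+\frac{2\log(n/k)}{n^3}
\]
lands exactly on the stated bound. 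One small stylistic point: the claim of ``room to spare'' is slightly overstated, since the worst-case accounting is an exact match; the genuine slack sits only inside the individual integral comparisons. Otherwise the argument is complete.
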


For the sake of brevity we have decided to omit proofs of these results. They can be found in Blackburn \emph{et al} (2012, pp.~13--15).

We are now ready to start showing that bounds as given above exist. We first consider $\kappa_E(S_n)$ and $\kappa_O(S_n)$. For all $n\geq 2$,
\begin{align*}
& \kappa_E(S_n) + \kappa_O(S_n)=4\kappa(S_n),\\
& 0\leq\kappa_E(S_n),\kappa_O(S_n)\leq 1 \textrm{ and}\\
& n^2\kappa(S_n)\leq C_{\kappa},
\end{align*}
where $C_{\kappa}:=13^2\kappa(S_{13})$ (Blackburn \emph{et al}, 2012, p.~15). Hence, we find that, for all $n\in\mathbb{N},$
\begin{align*}
&n^2\kappa_E(S_n)\leq 4C_{\kappa}\text{ and}\\
&n^2\kappa_O(S_n)\leq 4C_{\kappa},
\end{align*}
which establishes the required upper bounds for $\kappa_E(S_n)$ and $\kappa_O(S_n).$
\\
\\
We now consider the case of $Q(S_n)$. In this case, we can do better; we may follow the same argument as Blackburn \emph{et al} (2012, pp.~14-16) to find a tight upper bound for $n^{2}Q(S_n)$. In fact, we will prove that this upper bound is achieved at $n=4.$ We first need a result similar to Lemma 8.2 in Blackburn \emph{et al} (2012, p.~15). We define $C_2:=4^2Q(S_4).$

\begin{lem}\label{lem1} We have
\begin{description}
\item[(i)]
$Q(S_n)\leq C_2/n^2 \textrm{ for all } n\leq 300;$

\item[(ii)]
$\displaystyle \sum_{\substack{m=0\\m\textrm{ even}}}^{15} Q(S_m)=\frac{630468719}{521756235}<1.20836;$

\item[(iii)]
$\displaystyle \sum_{\substack{m=1\\m\textrm{ odd}}}^{15} Q(S_m)=\frac{4429844723}{3652293645}<1.21290;$

\item[(iv)]
$C_2=\frac{16}{9}<1.77778.$

\end{description}
\end{lem}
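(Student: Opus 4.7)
The plan is to verify each of the four parts by direct computation, built on a single combinatorial formula. For any partition $\lambda=(l_1,\ldots,l_k)$ of $n$ into distinct odd parts, the corresponding conjugacy class of $S_n$ has size $n!/(l_1\cdots l_k)$, because the distinctness of the parts eliminates the usual automorphism factor on the cycle structure. Summing the squared class probabilities gives
\[
Q(S_n) \;=\; \sum_{\lambda}\frac{1}{(l_1\cdots l_k)^2},
\]
where $\lambda$ ranges over partitions of $n$ into distinct odd parts. Every part of the lemma reduces to evaluating this sum.

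Given the formula, part (iv) is immediate: the only partition of $4$ into distinct odd parts is $3+1$, so $Q(S_4)=1/9$ and therefore $C_2 = 16\cdot Q(S_4)=16/9<1.77778$. For parts (ii) and (iii) I would tabulate $Q(S_m)$ for $0\le m\le 15$ using the formula above (with the conventions $Q(S_0)=Q(S_1)=1$, $Q(S_2)=0$, together with the handful of distinct-odd partitions at each larger $m$), add the resulting rationals in common denominators, and read off the stated fractions and decimal bounds. These are finite computations on partitions of size at most $15$, small enough to be checked by hand and cross-checked with a computer algebra system.

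For part (i), the task is to verify $n^2 Q(S_n)\le 16/9$ for every $1\le n\le 300$. This is a finite check: for each such $n$ one enumerates the partitions of $n$ into distinct odd parts and sums the reciprocals of the squared products. Equality essentially holds at $n=4$, and one expects $n^2 Q(S_n)$ to decay thereafter, so the real content is ruling out any intermediate ``bump'' in the range $5\le n\le 300$ where the uniform bound might fail. With the Mathematica environment already set up in the preamble, this is routine. The main obstacle is therefore computational bookkeeping rather than mathematics: the argument is entirely captured by the product formula for $Q(S_n)$, and everything else is verification of rational inequalities.
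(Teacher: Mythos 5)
Your proposal is correct and is essentially the paper's argument: your coefficient formula $Q(S_n)=\sum_{\lambda}1/(l_1\cdots l_k)^2$ over partitions into distinct odd parts (justified via the class size $n!/(l_1\cdots l_k)$) is exactly the content of the generating function $1+\sum_{n\ge 1}Q(S_n)x^n=\prod_{d\text{ odd}}\bigl(1+x^d/d^2\bigr)$ that the paper introduces, and both reduce all four parts to the same finite rational computations. The only difference is presentational --- the paper organises the enumeration as a truncated product expansion rather than a direct sum over partitions.
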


\begin{proof}
Most of these results are straightforward calculations. To prove part (i) we introduce the generating function $$\displaystyle 1+\sum_{n=1}^{\infty} Q(S_n)x^n =\prod_{d \textrm{ odd}} \left(1+\frac{x^d}{d^2}\right).$$ We easily see that this is indeed the required generating function as the coefficient of $x^n$ after multplying out the brackets will be $\sum_{S} \left (\prod_{s\in S} \frac{1}{s^2}\right )$ where the sum over all sets $S$ such that its elements are distinct odd integers and $\sum_{s\in S} s = n$. Parts (ii), (iii) and (iv) are straightforward calculations. 
\end{proof}

Two further results can be found in Lemma 8.2 of Blackburn \emph{et al} (2012, p.~15):

\begin{description}
\item[(i)] $60s_{15}(60) = \frac{158929798034197186400893117108816122671}{83317523526667097802976844202788608000} < 0.19076$
\item[(ii)] $ns_{15}(n) \geq (n+1)s_{15}(n+1) \textrm{ for } 14 \leq n \leq 60$.
\end{description}

We can now prove our main proposition in a similar way to the proof of Theorem 1.4 in Blackburn \emph{et al} (2012, pp.~15-16).

\begin{prop}
$Q(S_n)\leq C_2/n^2 \textrm{ for all } n\in\mathbb{N}.$ Moreover, since we defined $C_2:=4^2 Q(S_4)$, this bound is tight.
\end{prop}

\begin{proof}
We proceed by induction on $n$. By Lemma \ref{lem1}(i) the proposition holds if $n\leq 300$, and so we may conclude that $n>300$. By Proposition \ref{prop1} in the case $k=15$ we have
$$Q(S_n)\leq s_{15}(n)^2 +\sum_{\substack{l=15\\l\textrm{ odd}}}^{n} \frac{Q(S_{n-l})}{l^2}$$ 
and hence 
\begin{equation}
\label{bdeq1}
n^2 Q(S_n)\leq n^2 s_{15}(n)^2+n^2 \sum_{\substack{l=n-15\\l\textrm{ odd}}}^{n} \frac{Q(S_{n-l})}{l^2} +n^2 \sum_{\substack{l=15\\l\textrm{ odd}}}^{n-16} \frac{Q(S_{n-l})}{l^2}.
\end{equation}
It follows from Proposition \ref{preprop1} and the comments preceding this proposition that $ns_{15}(n) \leq 60s_{15}(60) <0.19076$. Hence $n^2s_{15}(n)^2 \leq 0.03639$.

%Approximating the three terms in this equation in the same way as in \cite[p./~19]{john} we find that $s_{15}(n)^2\leq0.03639$. Using \cite[p./~19]{john} again, we find that 

Using Lemma \ref{lem1}(ii) to bound the second term in (\ref{bdeq1}), we get
\begin{align*}n^2 \sum_{\substack{l=n-15\\l\textrm{ odd}}}^{n} \frac{Q(S_{n-l})}{l^2}&\leq \left ( \frac{n}{n-15} \right)^2\sum_{\substack{m=1\\m\textrm{ odd}}}^{15} Q(S_m)\\ &\leq \left(\frac{300}{285}\right)^{2}\sum_{\substack{m=1\\m\textrm{ odd}}}^{15} Q(S_m)\\ &\leq 1.34393.\end{align*} 
It is clear from Lemma \ref{lem1}(ii) and (iii) that the sum over odd values of $Q(S_n)$ is the larger of the two values, and thus the one we take for our bound. 

%Here we can forget about the case in which we have to sum over even $m$ because by Lemma \ref{lem1} that would give us a smaller value anyway.

For the third term in (\ref{bdeq1}) we use the inductive hypothesis to get

\[
n^2 \sum_{\substack{l=15\\l \textrm{ odd}}}^{n-16} \frac{Q(S_{n-l})}{l^2} \leq n^2 \sum_{\substack{l=15\\l \textrm{ odd}}}^{n-16} \frac{C_2}{l^2(n-l)^2}.
\]
%Lastly approximating the third term in the exact same way as in the paper \cite[p./~19]{john} we find that 
Using the symmetry of $l^2(n-l)^2$ in this sum and then applying Lemma \ref{prelem2} in the case $n=15,$ we get
\begin{align*}
n^2 \sum_{\substack{l=15\\l \textrm{ odd}}}^{n-16}\frac{1}{l^2(n-l)^2} &\leq n^2 \sum_{l=15}^{n-16} \frac{1}{l^2(n-l)^2} \\
&\leq 2n^2 \sum_{\lceil n/2 \rceil}^{n-16} \frac{1}{l^2(n-l)^2} + \frac{n^2}{15^2(n-15)^2}\\
&\leq 2 \left(\frac{1}{15} + \frac{2\log(n/15)}{n} \right) + \frac{1}{15^2}\left(\frac{300}{285}\right)^2.
\end{align*}
Since $\log(n/15)/n$ is decreasing for $n>40$, it follows from the upper bound for $C_2$ in Lemma \ref{lem1}(iv) that
$$n^2 \sum_{\substack{l=15\\l\textrm{ odd}}}^{n-16} \frac{Q(S_{n-l})}{l^2}\leq C_2\left (\frac{2}{15}+\frac{4\log(300/15)}{300}+\frac{300^2}{15^2\cdot 285^2}\right)\leq 0.31681.$$
Hence $$ n^2 Q(S_n)\leq 0.03639+1.34393+ 0.31681=1.69713<C_2$$ and the proposition follows. 

\end{proof}

\section{Limits}\label{section4}
In this section we will prove asymptotic results on $\kappa_E(S_n)$, $Q(S_n)$ and $\kappa(A_n)$ analogous to Theorem 1.5 in Blackburn \emph{et al} (2012, p.~3).

We will need the following lemma, found in Blackburn \emph{et al} (2012, p.~12). Recall that we write $s_k(n)$ for the probability that a permutation of $\Omega_n$, chosen at random, has only cycles of length strictly less that k.

\begin{lem}
\label{preprop2}
For all $n,k \in \mathbb{N}$ with $k \geq 2$ we have $s_k(n) \leq 1/t! < \left(\frac{e}{t}\right)^t$ when $t = \lfloor n/(k-1) \rfloor$.
\end{lem}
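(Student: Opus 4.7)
My plan is to prove the first inequality $s_k(n)\leq 1/t!$ by induction on $n$, starting from the recurrence obtained by conditioning on the cycle through the point $1$; the second inequality $1/t!<(e/t)^t$ will then be a standard consequence of the Taylor expansion of $e^t$.

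First I would establish the recurrence. By Lemma \ref{prelem1}, for a uniformly random $\sigma\in S_n$ the cycle containing $1$ has length $\ell$ with probability $1/n$ for each $\ell\in\{1,\ldots,n\}$, and conditional on this length the restriction of $\sigma$ to the remaining $n-\ell$ points is uniform on $S_{n-\ell}$. The event that all cycles have length less than $k$ forces $\ell \leq k-1$, so for every $n\geq 1$,
$$s_k(n)=\frac{1}{n}\sum_{\ell=1}^{\min(k-1,n)} s_k(n-\ell).$$

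The induction itself is then routine. Write $n=t(k-1)+r$ with $0\leq r<k-1$; a short check of floors shows $\lfloor (n-\ell)/(k-1)\rfloor=t$ for $1\leq \ell\leq r$ and $\lfloor (n-\ell)/(k-1)\rfloor=t-1$ for $r+1\leq \ell\leq k-1$. The base case $n\leq k-1$ is trivial since then $s_k(n)=1$ and $t\in\{0,1\}$, so $1/t!=1$. For $n\geq k$, the inductive hypothesis applied to each term of the recurrence gives
$$s_k(n)\leq \frac{1}{n}\!\left[\frac{r}{t!}+\frac{k-1-r}{(t-1)!}\right]=\frac{t(k-1)+r(1-t)}{n\,t!},$$
and since $t\geq 1$ and $r\geq 0$ force $r(1-t)\leq 0$, the numerator is at most $t(k-1)\leq n$. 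Thus $s_k(n)\leq 1/t!$, as required.

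Finally, the second inequality $1/t!<(e/t)^t$ for $t\geq 1$ follows immediately from the strict bound $e^t=\sum_{j\geq 0} t^j/j!>t^t/t!$. I do not foresee any real obstacle here; the only mildly delicate point is the indexing in the case split on whether $\ell\leq r$ or $\ell>r$ in the inductive step, but that is purely bookkeeping.
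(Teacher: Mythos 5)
The paper offers no proof of this lemma to compare against: it is quoted directly from \cite{john} (see the sentence ``found in \cite[p.~14]{john}'' preceding the statement), so any correct self-contained argument is a net gain. Yours is correct. The recurrence $s_k(n)=\frac1n\sum_{\ell=1}^{\min(k-1,n)}s_k(n-\ell)$ is legitimate: Lemma \ref{prelem1} gives that $1$ lies in an $\ell$-cycle with probability $1/n$, and the conditional uniformity of the restriction to the complementary $(n-\ell)$-set is the same standard fact already used in the proofs of Propositions \ref{prop1} and \ref{prop2} (or can be checked by counting: there are $\binom{n-1}{\ell-1}(\ell-1)!$ ways to build the cycle through $1$). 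The floor bookkeeping is right: writing $n=t(k-1)+r$ with $0\le r<k-1$ and $n\ge k$ (hence $t\ge1$), one gets $\lfloor (n-\ell)/(k-1)\rfloor=t$ for $\ell\le r$ and $t-1$ for $r<\ell\le k-1$, and the resulting bound $\frac{t(k-1)+r(1-t)}{n\,t!}\le\frac{t(k-1)}{n\,t!}\le\frac1{t!}$ closes the induction; the base case $n\le k-1$ and the inequality $e^t>t^t/t!$ are both fine. The one blemish is inherited from the statement itself rather than from your argument: when $n<k-1$ we have $t=0$, so the second inequality $1/t!<(e/t)^t$ is vacuous or meaningless, and your implicit restriction to $t\ge1$ is the right reading (it is also the only regime in which the lemma is ever applied, namely $k=\lfloor n/\log n\rfloor$ with $n$ large).
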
 

%Firstly, let's set 

%\begin{align*}
%A_{1} & = \sum_{\substack{d=0\\d \textrm{ even}}}^{\infty} \kappa_O(S_d) + \sum_{\substack{d=1\\ d \textrm{ odd}}}^{\infty} \kappa_E(S_d)\\
%A_{2} & = \sum_{\substack{d=0\\d \textrm{ even}}}^{\infty} \kappa_E(S_d) + \sum_{\substack{d=1\\ d \textrm{ odd}}}^{\infty} \kappa_O(S_d)\\
%\end{align*}

For brevity, we introduce the following notation:
\begin{align*}
A_{1} & = \sum_{\substack{d=0\\d \textrm{ even}}}^{\infty} \kappa_O(S_d) + \sum_{\substack{d=1\\ d \textrm{ odd}}}^{\infty} \kappa_E(S_d);\\
A_{2} & = \sum_{\substack{d=0\\d \textrm{ even}}}^{\infty} \kappa_E(S_d) + \sum_{\substack{d=1\\ d \textrm{ odd}}}^{\infty} \kappa_O(S_d);\\
B_{1} & = \sum_{\substack{d=1\\ d \textrm{ odd}}}^{\infty} Q(S_d);\\
B_{2} & = \sum_{\substack{d=0\\ d \textrm{ even}}}^{\infty} Q(S_d).
\end{align*}

\begin{prop}\label{prop3}
\[
\liminf_{\substack{n \rightarrow \infty \\ n \textrm{ even}}} n^2 \kappa_E(S_n) \geq \displaystyle A_{1},
\]

\[
\liminf_{\substack{n \rightarrow \infty \\ n \textrm{ odd}}} n^2 \kappa_E(S_n) \geq \displaystyle A_{2}.
\]

\end{prop}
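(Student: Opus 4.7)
The plan is to apply the lower bound of Proposition \ref{prop1} with $k := n - D$ for a fixed constant $D$, change variables by $d = n - l$, pass to the $\liminf$ in $n$, and finally send $D \to \infty$. The crucial observation is that after the substitution, each coefficient $n^2/(n-d)^2$ is bounded below by $1$, so no refined control on how these coefficients approach $1$ is needed; pointwise non-negativity does the work.

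Fix $D \in \mathbb{N}$ and take $n > 2D$, with $n$ even for now, so that $k = n - D$ satisfies $n/2 < k \leq n$ and the lower bound in Proposition \ref{prop1} applies. Multiplying through by $n^2$ and substituting $d = n - l$, I observe that $l$ has the same parity as $n - d$; since $n$ is even, the indices $l$ even and $l$ odd correspond to $d$ even and $d$ odd respectively. Hence
\[
n^2 \kappa_E(S_n) \geq \sum_{\substack{d=0\\ d \text{ even}}}^{D} \frac{n^2}{(n-d)^2}\,\kappa_O(S_d) + \sum_{\substack{d=1\\ d \text{ odd}}}^{D} \frac{n^2}{(n-d)^2}\,\kappa_E(S_d).
\]
Because $n^2/(n-d)^2 \geq 1$ for $0 \leq d < n$, the right-hand side dominates the $D$-th partial sum of $A_{1}$. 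Taking $\liminf$ over even $n \to \infty$ therefore gives
\[
\liminf_{\substack{n \to \infty\\ n \text{ even}}} n^2 \kappa_E(S_n) \geq \sum_{\substack{d=0\\ d \text{ even}}}^{D} \kappa_O(S_d) + \sum_{\substack{d=1\\ d \text{ odd}}}^{D} \kappa_E(S_d),
\]
and letting $D \to \infty$ yields the first inequality. The convergence of $A_{1}$ is guaranteed by the uniform bounds $\kappa_E(S_d), \kappa_O(S_d) = O(1/d^2)$ established in Section \ref{bound}.

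For the case $n$ odd the argument is identical apart from parity bookkeeping: when $n$ is odd, $l$ even corresponds to $d$ odd and $l$ odd corresponds to $d$ even, so the truncated lower bound now picks out the partial sums of $A_{2}$ rather than $A_{1}$. There is no real obstacle here; the only point requiring care is the parity swap in the substitution $d = n - l$, and the strategy avoids any need to estimate an error term in $n^2/(n-d)^2 \approx 1$ because non-negativity of every summand and pointwise domination do everything.
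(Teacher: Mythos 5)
Your proposal is correct and follows essentially the same route as the paper: both apply the lower bound of Proposition \ref{prop1}, use $n^2/l^2 \geq 1$ to discard the coefficients, and handle the parity swap in the substitution $d = n-l$ identically. The only cosmetic difference is your choice of cutoff $k = n - D$ with $D \to \infty$ afterwards, where the paper takes $k = \lfloor 3n/4 \rfloor$ so that the number of retained terms grows with $n$ directly; the two devices are interchangeable here.
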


\begin{proof}
The second part of Proposition \ref{prop1} says that, if $k > n/2$ then 

\[
\kappa_E(S_n) \geq \sum_{\substack{l=k \\ l \textrm{ even}}}^{n} \frac{\kappa_O(S_{n-l})}{l^2} + \sum_{\substack{l=k \\ l \textrm{ odd}}}^{n} \frac{\kappa_E(S_{n-l})}{l^2}.
\]
Hence

\[
n^2 \kappa_E(S_n)\geq  \sum_{\substack{l=k \\ l \textrm{ even}}}^{n} \kappa_O(S_{n-l}) + \sum_{\substack{l=k \\ l \textrm{ odd}}}^{n} \kappa_E(S_{n-l}).
\]

Taking $k=\left\lfloor 3n/4 \right\rfloor$ and letting $n \rightarrow \infty$ we see that

\[
\liminf_{\substack{n \rightarrow \infty \\ n \textrm{ even}}} n^2 \kappa_E(S_n) \geq \displaystyle \sum_{\substack{d=0\\d \textrm{ even}}}^{\infty} \kappa_O(S_d) + \sum_{\substack{d=1\\ d \textrm{ odd}}}^{\infty} \kappa_E(S_d) = A_{1},
\]

\[
\liminf_{\substack{n \rightarrow \infty \\ n \textrm{ odd}}} n^2 \kappa_E(S_n) \geq \displaystyle \sum_{\substack{d=0\\d \textrm{ even}}}^{\infty} \kappa_E(S_d) + \sum_{\substack{d=1\\ d \textrm{ odd}}}^{\infty} \kappa_O(S_d)=A_2.
\] 

%where the $+1$ in $A_2$ is caused by $n-$cycles being important for the limit, while $S_0$ is undefined.

\end{proof}

\begin{prop}\label{prop4}
\[
\limsup_{\substack{n \rightarrow \infty \\ n \textrm{ even}}} n^2 \kappa_E(S_n) \leq
 \displaystyle A_{1}, 
\]

\[
\limsup_{\substack{n \rightarrow \infty \\ n \textrm{ odd}}} n^2 \kappa_E(S_n) \leq \displaystyle A_{2}.
\]
\end{prop}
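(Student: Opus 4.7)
The plan is to mirror the $\liminf$ argument of Proposition \ref{prop3}, but working from the upper inequality of Proposition \ref{prop1} and carefully absorbing the resulting error terms. Fix $k \geq 2$. Multiplying the first inequality of Proposition \ref{prop1} by $n^2$ and re-indexing via $d = n - l$ gives
\[
n^2 \kappa_E(S_n) \leq n^2 s_k(n)^2 + \sum_{d=0}^{n-k} \frac{n^2}{(n-d)^2}\, \chi_n(d),
\]
where $\chi_n(d)$ equals $\kappa_O(S_d)$ when $n - d$ is even and $\kappa_E(S_d)$ when $n - d$ is odd. In particular, for $n$ even, even $d$ produces $\kappa_O(S_d)$ and odd $d$ produces $\kappa_E(S_d)$, which is precisely the combination defining $A_1$; for $n$ odd the pairing is reversed and matches $A_2$.

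First, the $s_k$ term handles itself: for fixed $k$, Lemma \ref{preprop2} gives $s_k(n) \leq (e/t)^t$ with $t = \lfloor n/(k-1)\rfloor$, so $n^2 s_k(n)^2 \to 0$ as $n \to \infty$. Next I would split the main sum at an auxiliary cutoff $M \geq k$. On the head $0 \leq d \leq M$, the factor $n^2/(n-d)^2$ tends to $1$ uniformly in $d$, so for fixed $M$ the head converges (as $n \to \infty$ through one parity class) to the truncation $A_1^{(M)} := \sum_{d \leq M,\, d \text{ even}} \kappa_O(S_d) + \sum_{d \leq M,\, d \text{ odd}} \kappa_E(S_d)$ (respectively $A_2^{(M)}$), and these truncations tend to $A_1$ (respectively $A_2$) as $M \to \infty$.

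The main obstacle is controlling the tail $\sum_{d=M+1}^{n-k} \frac{n^2}{(n-d)^2} \chi_n(d)$ uniformly in $n$. Using the uniform bound $\chi_n(d) \leq C/d^2$ from Section \ref{bound} (with $C := \max\{C_0, C_1\}$), I would split further at $d = n/2$. For $M < d \leq n/2$ the factor $n^2/(n-d)^2 \leq 4$, so this piece is bounded by $4C \sum_{d>M} 1/d^2$, which vanishes as $M \to \infty$ independently of $n$. For $n/2 < d \leq n-k$, substituting $l = n-d$ and using the $l \leftrightarrow n-l$ symmetry of $1/(l^2(n-l)^2)$ in combination with Lemma \ref{prelem2}, the contribution is bounded by $C/k + O(\log(n/k)/n) + O(1/k^2)$.

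Putting the pieces together,
\[
\limsup_{\substack{n \to \infty\\ n \text{ even}}} n^2 \kappa_E(S_n) \leq A_1^{(M)} + 4C \sum_{d>M} \frac{1}{d^2} + \frac{C}{k} + \frac{C}{k^2}.
\]
Since the left-hand side depends on neither $M$ nor $k$, I would send $M \to \infty$ first (killing the middle term and promoting $A_1^{(M)}$ to $A_1$) and then $k \to \infty$ (killing the remaining $C/k$ and $C/k^2$), yielding the desired bound. The argument for $n$ odd is identical after swapping parities in the definition of $\chi_n$, delivering $A_2$ in place of $A_1$. The subtlest bookkeeping is to keep the parity of $d$ aligned with the correct $\kappa_E$ or $\kappa_O$ throughout, so that the head limit genuinely assembles $A_1^{(M)}$ or $A_2^{(M)}$ rather than some mismatched combination.
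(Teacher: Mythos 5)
Your argument is correct, and it shares the paper's overall architecture (start from the upper inequality of Proposition \ref{prop1}, kill the $s_k$ term via Lemma \ref{preprop2}, control the middle range via the uniform bounds of Section \ref{bound} together with Lemma \ref{prelem2}, and identify the head of the sum with a truncation of $A_1$ or $A_2$), but the limiting scheme is genuinely different. The paper takes $k = \lfloor n/\log n\rfloor$ growing with $n$, so that the single cutoff $k$ serves simultaneously as the head boundary and as the lower limit of the recursive sum; the price is the more delicate estimate $\log(n s_k(n)) = \log n - \log n\log\log n + o(1) \to -\infty$, and the head is then handled with an $\epsilon$-argument using $m/n < 1/\log n < \epsilon$. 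You instead keep $k$ fixed, which reduces the $s_k$ estimate to the trivial observation that $n^2 s_k(n)^2 \to 0$ for fixed $k$, but forces you to introduce a second cutoff $M$ (and a further split at $d = n/2$) and to take a double limit, $M \to \infty$ then $k \to \infty$, to dispose of the residual $4C\sum_{d>M} d^{-2}$ and $C/k + C/k^2$ terms. Both routes are valid; yours trades the asymptotic analysis of $s_{\lfloor n/\log n\rfloor}(n)$ for slightly heavier bookkeeping in the tail, and your parity tracking ($d$ even $\mapsto \kappa_O$, $d$ odd $\mapsto \kappa_E$ when $n$ is even, and the reverse when $n$ is odd) correctly assembles $A_1$ and $A_2$.
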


\begin{proof}
Let $k=\left\lfloor \frac{n}{\log(n)} \right\rfloor$. By Proposition \ref{prop1} we have 
\[
\kappa_E(S_n) \leq s_k(n)^2+\sum_{\substack{l=k \\ l \textrm{ even}}}^{n} \frac{\kappa_O(S_{n-l})}{l^2} + \sum_{\substack{l=k \\ l \textrm{ odd}}}^{n} \frac{\kappa_E(S_{n-l})}{l^2}.
\] 

By Lemma \ref{preprop2} we have
\[
s_k(n) < \left( \frac{e}{t} \right)^t
\]
where $t= \lfloor \frac{n}{k-1} \rfloor$. Writing $k= n/\log n + O(1) $ we have $$\lfloor \frac{n}{k-1} \rfloor = (\log n)\left(1+O(\frac{\log n}{n})\right),$$ and so 
\begin{align*}
\log(ns_k(n)) &< \log (n) t(1-\log t )\\
&= \log n - \log n \log \log n + \log \left(1+O\left(\frac{\log n}{n}\right)\right)\\
& \rightarrow -\infty
\end{align*}
as $n \rightarrow \infty$. Hence $ns_k(n) \rightarrow 0$ as $n \rightarrow \infty$.

We estimate the main sum in the same way as in Blackburn \emph{et al} (2012, p.~17). This gives
\begin{align}
&\sum_{\substack{l=k \\ l \textrm{ even}}}^{n} \frac{\kappa_O(S_{n-l})}{l^2} + \sum_{\substack{l=k \\ l \textrm{ odd}}}^{n} \frac{\kappa_E(S_{n-l})}{l^2} \notag\\ 
&\leq \sum_{\substack{l=n-k \\ l \textrm{ even}}}^{n} \frac{\kappa_O(S_{n-l})}{l^2} + \sum_{\substack{l=n-k \\ l \textrm{ odd}}}^{n} \frac{\kappa_E(S_{n-l})}{l^2} +  \sum_{\substack{l=k \\ l \textrm{ even}}}^{n-k-1} \frac{\kappa_O(S_{n-l})}{l^2} + \sum_{\substack{l=k \\ l \textrm{ odd}}}^{n-k-1} \frac{\kappa_E(S_{n-l})}{l^2} \notag \\
&\leq  \sum_{\substack{l=n-k \\ l \textrm{ even}}}^{n} \frac{\kappa_O(S_{n-l})}{l^2} + \sum_{\substack{l=n-k \\ l \textrm{ odd}}}^{n} \frac{\kappa_E(S_{n-l})}{l^2} +  \sum_{l=k}^{n-k-1} \frac{C'}{l^2(n-l)^2} \label{lim2}\\
&\leq \sum_{\substack{l=n-k \\ l \textrm{ even}}}^{n} \frac{\kappa_O(S_{n-l})}{l^2} + \sum_{\substack{l=n-k \\ l \textrm{ odd}}}^{n} \frac{\kappa_E(S_{n-l})}{l^2} + \sum_{l=\left\lceil n/2\right\rceil}^{n-k-1} \frac{2C'}{l^2(n-l)^2} + \frac{C'}{k^2(n-k)^2}, \label{lim3}
\end{align}
where equation (\ref{lim2}) is justified by the bounds in Section \ref{bound}, $C'$ is defined as $\max\{C_{0},C_1\}$ and equation (\ref{lim3}) is as a result of the symmetry in $l^2(n-l)^2$.

By Lemma \ref{prelem2} the third term in equation (\ref{lim3}) is at most $(2C'\log n)/n^3$. Moreover, from the identity $\frac{n}{k(n-k)}=\frac{1}{k} + \frac{1}{n-k}$ it is clear that $\frac{n^2}{k^2(n-k)^2} \rightarrow 0 \textrm{ as } n \rightarrow \infty$. So the last two terms in equation (\ref{lim3}) are $o(n^{-2})$ and may safely be ignored.\\
\\
We now consider the case when $n$ is even. Let $\epsilon \in \mathbb{R}$ be such that $0 < \epsilon < 1$. For all $n$ such that $1/ \log(n) < \epsilon$ we have 
\[
m/n \leq (n/\log(n))/n < \epsilon
\]
and thus
\begin{align*}
&n^2 \left(\sum_{\substack{l=n-k \\ l \textrm{ even}}}^{n} \frac{\kappa_O(S_{n-l})}{l^2} + \sum_{\substack{l=n-k \\ l \textrm{ odd}}}^{n} \frac{\kappa_E(S_{n-l})}{l^2}\right)\\
&\hspace{4.5cm}= n^2 \left(\sum_{\substack{m=0 \\ m \textrm{ even}}}^{k} \frac{\kappa_O(S_{m})}{(n-m)^2} + \sum_{\substack{m=1 \\ m \textrm{ odd}}}^{k} \frac{\kappa_E(S_{m})}{(n-m)^2} \right) \\
&\hspace{4.5cm} = \sum_{\substack{m=0 \\ m \textrm{ even}}}^{k} \frac{\kappa_O(S_{m})}{(1-m/n)^2} + \sum_{\substack{m=1 \\ m \textrm{ odd}}}^{k} \frac{\kappa_E(S_{m})}{(1-m/n)^2}\\
&\hspace{4.5cm}\leq \frac{1}{(1-\epsilon)^2}\left(\sum_{\substack{m=0 \\ m \textrm{ even}}}^{k} \kappa_O(S_{m}) + \sum_{\substack{m=1 \\ m \textrm{ odd}}}^{k} \kappa_E(S_{m})\right).
\end{align*}
These remarks give us
\begin{align*}
\limsup_{\substack{n \rightarrow \infty \\ n \textrm{ even}}} n^2 \kappa_E(S_n) 
&\leq \limsup_{\substack{n \rightarrow \infty \\ n \textrm{ even}}} n^2 \kappa_E(S_n)\\ 
&\leq \frac{1}{(1-\epsilon)^2}\left(\sum_{\substack{m=0 \\ l \textrm{ even}}}^{\left\lfloor n/ \log(n)\right\rfloor} \kappa_O(S_{m}) + \sum_{\substack{m=1 \\ l \textrm{ odd}}}^{\left\lfloor n/ \log(n)\right\rfloor} \kappa_E(S_{m})\right)\\
&\leq  \frac{1}{(1-\epsilon)^2}\left(\sum_{\substack{m=0 \\ l \textrm{ even}}}^{\infty} \kappa_O(S_{m}) + \sum_{\substack{m=1 \\ l \textrm{ odd}}}^{\infty} \kappa_E(S_{m})\right)\\
&=\frac{A_{1}}{(1-\epsilon)^2}.
\end{align*}
But as $\epsilon$ was arbitary, we conclude that
\[
\limsup_{\substack{n \rightarrow \infty \\ n \textrm{ even}}} n^2 \kappa_E(S_n)  \leq A_{1}.
\]

Similarly, we find that 
\[
\limsup_{\substack{n \rightarrow \infty \\ n \textrm{ odd}}} n^2 \kappa_E(S_n)  \leq A_{2},
\]
which completes the proof. 
\end{proof}

From Propositions \ref{prop3} and \ref{prop4} we can now conclude that
\[
\lim_{\substack{n \rightarrow \infty \\ n \textrm{ even}}}n^2 \kappa_E(S_n) = \displaystyle A_1
\]
and
\[
\lim_{\substack{n \rightarrow \infty \\ n \textrm{ odd}}}n^2 \kappa_E(S_n) =\displaystyle A_2.
\] 

This concludes the first half of this section. The second half of this section will focus on $Q(S_n)$ using a similar approach to that of the first half. 
%Let's set

%\begin{align*}
%B_{1} & = \sum_{\substack{d=1\\ d \textrm{ odd}}}^{\infty} Q(S_d)\\
%B_{2} & = \sum_{\substack{d=0\\ d \textrm{ even}}}^{\infty} Q(S_d)
%\end{align*}

\begin{prop}\label{prop5}
\[
\liminf_{\substack{n \rightarrow \infty \\ n \textrm{ even}}} n^2 Q(S_n) \geq \displaystyle B_{1},
\]

\[
\liminf_{\substack{n \rightarrow \infty \\ n \textrm{ odd}}} n^2 Q(S_n)\geq \displaystyle B_{2}.
\]
\end{prop}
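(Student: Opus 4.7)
The plan is to mimic the proof of Proposition \ref{prop3} very closely, replacing $\kappa_E$ by $Q$ and using the lower-bound half of Proposition \ref{prop2} in place of Proposition \ref{prop1}. First I would take $k$ with $n/2 < k \leq n$ and invoke
\[
Q(S_n) \geq \sum_{\substack{l=k\\ l \text{ odd}}}^{n} \frac{Q(S_{n-l})}{l^2}.
\]
Multiplying by $n^2$ and using the trivial estimate $n^2/l^2 \geq 1$ valid for $l \leq n$, I obtain
\[
n^2 Q(S_n) \geq \sum_{\substack{l=k\\ l \text{ odd}}}^{n} Q(S_{n-l}).
\]

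Next I would substitute $m = n-l$. Since $l$ is odd, the parity of $m$ is opposite to the parity of $n$: if $n$ is even then $m$ is odd, and if $n$ is odd then $m$ is even. Taking $k = \lfloor 3n/4 \rfloor$, the summation index $m$ ranges over integers of the appropriate parity in $[0, n-k]$, and $n-k \to \infty$ as $n \to \infty$. Thus, restricting to even $n$, I get
\[
n^2 Q(S_n) \geq \sum_{\substack{m=1\\ m \text{ odd}}}^{n-k} Q(S_m),
\]
and letting $n \to \infty$ through even integers yields $\liminf n^2 Q(S_n) \geq B_1$. The odd-$n$ case is identical and produces $B_2$.

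To pass to the limit it suffices to know that $B_1$ and $B_2$ are finite, which is immediate from the uniform bound $Q(S_m) \leq C_2/m^2$ established in Section \ref{bound}: the series converge absolutely, so the partial sums over the correct parity class tend to $B_1$ and $B_2$ respectively. There is no real obstacle here — the argument is structurally identical to Proposition \ref{prop3}, and the only substantive input beyond the lower bound of Proposition \ref{prop2} is the parity bookkeeping coming from the restriction to odd $l$.
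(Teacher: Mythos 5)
Your proposal is correct and follows essentially the same route as the paper: the paper likewise invokes the lower-bound half of Proposition \ref{prop2}, rewrites the sum via $m = n-l$ (noting that odd $l$ flips the parity, which is exactly your bookkeeping), and then cites the argument of Proposition \ref{prop3} with $k = \lfloor 3n/4\rfloor$. Your added remark about the convergence of $B_1$ and $B_2$ is harmless but not needed, since a $\liminf$ lower bound follows from comparing against each fixed partial sum.
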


\begin{proof}
The second half of Proposition \ref{prop2} tells us that  if $k$ is such that $\frac{n}{2}<k\leq n$, then 

$$
Q(S_n) \geq \sum_{\substack{l=k\\l \textrm{ odd}}}^{n} \frac{Q(S_{n-l})}{l^2}=
\begin{cases*}
\displaystyle \sum_{\substack{m=1\\ m \textrm{ odd}}}^{n-k} \frac{Q(S_m)}{(n-m)^{2}}, & \textrm{ for n even,}\\
\displaystyle \sum_{\substack{m=0\\ m \textrm{ even}}}^{n-k} \frac{Q(S_m)}{(n-m)^{2}}, & \textrm{ for n odd.}
\end{cases*}
$$
Hence the result follows in the same way as in Proposition \ref{prop3}. 
\end{proof}

\begin{prop}\label{prop6}
\[
\limsup_{\substack{n \rightarrow \infty \\ n \textrm{ even}}} n^2 Q(S_n) \leq \displaystyle B_{1},
\]

\[
\limsup_{\substack{n \rightarrow \infty \\ n \textrm{ odd}}} n^2 Q(S_n) \leq
\displaystyle B_{2}. 
\]
\end{prop}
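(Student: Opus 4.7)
The plan is to mirror the argument of Proposition \ref{prop4}, replacing the first inequality of Proposition \ref{prop1} by the analogous first inequality of Proposition \ref{prop2}. Setting $k = \lfloor n/\log n \rfloor$, Proposition \ref{prop2} gives
\[
n^2 Q(S_n) \leq n^2 s_k(n)^2 + n^2 \sum_{\substack{l=k \\ l \textrm{ odd}}}^{n} \frac{Q(S_{n-l})}{l^2},
\]
and my aim is to show that the first term vanishes, the ``bulk'' of the remaining sum also vanishes, and the ``head'' of the remaining sum tends to $B_1$ or $B_2$ according to the parity of $n$.

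First I would dispose of $n^2 s_k(n)^2$ by the same application of Lemma \ref{preprop2} used in Proposition \ref{prop4}: with $t = \lfloor n/(k-1)\rfloor$, the computation $\log(n s_k(n)) \to -\infty$ immediately gives $n^2 s_k(n)^2 \to 0$. Next I would split the summation range at $l = n-k$. On the middle range $k \leq l \leq n-k-1$, I would apply the uniform bound $Q(S_{n-l}) \leq C_2/(n-l)^2$ established in Section \ref{bound}, drop the odd-parity restriction, exploit the symmetry $l \leftrightarrow n-l$, and invoke Lemma \ref{prelem2}. This yields a bound of order
\[
C_2 \left( \frac{2}{k} + \frac{4 \log(n/k)}{n} + \frac{n^2}{k^2(n-k)^2} \right),
\]
which tends to $0$ for our choice of $k$, exactly as in Proposition \ref{prop4}.

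The head range $n - k \leq l \leq n$ is the one that carries the parity information. Substituting $m = n-l$, the condition that $l$ is odd becomes the condition that $m+n$ is odd. Fixing $\epsilon \in (0,1)$ and taking $n$ large enough that $k/n < \epsilon$, I would bound each factor $1/(1-m/n)^2$ by $1/(1-\epsilon)^2$ to obtain an upper bound of
\[
\frac{1}{(1-\epsilon)^2} \sum_{\substack{0 \leq m \leq k \\ m+n \textrm{ odd}}} Q(S_m),
\]
which is at most $B_1/(1-\epsilon)^2$ when $n$ is even and at most $B_2/(1-\epsilon)^2$ when $n$ is odd. Taking $\limsup$ and then letting $\epsilon \to 0$ gives both claimed inequalities. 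The argument is essentially a direct transcription of the proof of Proposition \ref{prop4}; the only step requiring genuine care, and hence the main obstacle, is the parity bookkeeping in the substitution $m = n-l$, which is exactly what forces the two different limits $B_1$ and $B_2$ depending on the parity of $n$.
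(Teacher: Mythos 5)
Your proposal is correct and follows essentially the same route as the paper's own proof: the same choice $k = \lfloor n/\log n\rfloor$, the same disposal of $n^2 s_k(n)^2$, the same split at $l = n-k$ with the uniform bound $Q(S_{n-l}) \leq C_2/(n-l)^2$ and Lemma \ref{prelem2} on the bulk, and the same parity analysis of the head term yielding $B_1$ for $n$ even and $B_2$ for $n$ odd. No substantive differences to report.
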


\begin{proof}
As before, we let $k=\left\lfloor \frac{n}{\log(n)} \right\rfloor$. By Proposition \ref{prop2}

\[
Q(S_n)\leq s_k(n)^2 + \sum_{\substack{l=k \\ l \textrm{ odd}}}^{n} \frac{Q(S_{n-l})}{l^2}.
\]
We already know that $n s_{k} \to 0$ as $n\to \infty$ and we estimate the remaining sum as before:
\begin{align*}
\sum_{\substack{l=k \\ l \textrm{ odd}}}^{n} \frac{Q(S_{n-l})}{l^2}
& = \sum_{\substack{l=n-k \\ l \textrm{ odd}}}^{n} \frac{Q(S_{n-l})}{l^2}+\sum_{\substack{l=k \\ l \textrm{ odd}}}^{n-k-1} \frac{Q(S_{n-l})}{l^2}\\
& \leq \sum_{\substack{l=n-k \\ l \textrm{ odd}}}^{n} \frac{Q(S_{n-l})}{l^2} +\sum_{l=k}^{n-k-1} \frac{Q(S_{n-l})}{l^2}\\
& \leq \sum_{\substack{l=n-k \\ l \textrm{ odd}}}^{n} \frac{Q(S_{n-l})}{l^2} +\sum_{l=k}^{n-k-1} \frac{C_2}{l^{2}(n-l)^{2}}\\
& \leq \sum_{\substack{l=n-k \\ l \textrm{ odd}}}^{n} \frac{Q(S_{n-l})}{l^2} +\sum_{l=\lceil n/2\rceil}^{n-k-1} \frac{2C_2}{l^{2}(n-l)^{2}} +\frac{C_2}{k^2(n-k)^2}.
\end{align*}
As in Proposition \ref{prop4}, we find that everything in this sum is $o(n^{-2})$ except the term 
$$
\sum_{\substack{l=n-k \\ l \textrm{ odd}}}^{n} \frac{Q(S_{n-l})}{l^2}=
\begin{cases*}
\displaystyle \sum_{\substack{m=1\\ m \textrm{ odd}}}^{k} \frac{Q(S_m)}{(n-m)^{2}}, & \textrm{ for n is even,}\\
\displaystyle \sum_{\substack{m=0\\ m \textrm{ even}}}^{k} \frac{Q(S_m)}{(n-m)^{2}}, & \textrm{ for n is odd.}
\end{cases*}
$$
Hence, similarly to before, we find that for an arbitrary $0<\epsilon<1$

$$
\limsup_{\substack{n\to\infty \\ n \textrm{ even}}}n^2 Q(S_n) \leq \displaystyle \frac{B_{1}}{(1-\epsilon)^{2}},
$$

$$
\limsup_{\substack{n\to\infty \\ n \textrm{ odd}}}n^2 Q(S_n) \leq \displaystyle \frac{B_{2}}{(1-\epsilon)^{2}}
$$
and the proposition follows since $\epsilon$ is arbitrary. 
\end{proof}

From Proposition \ref{prop5} and \ref{prop6} we can now conclude that

\[
\lim_{\substack{n \to\infty\\ n \textrm{ even}}}n^2 Q(S_n) = B_1,
\]

\[
\lim_{\substack{n \to\infty\\ n \textrm{ odd}}}n^2 Q(S_n) = B_2.
\]

\section{Asymptotics of $\kappa(A_n)$}\label{section5}

\begin{lem} We have
$$\kappa(A_n)=\kappa_E(S_n)-2Q(S_n).$$
\end{lem}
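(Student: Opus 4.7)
The plan is to expand both $\kappa(A_n)$ and $\kappa_E(S_n)$ as sums of squared class sizes (divided by $|A_n|$) and then exploit the well known dichotomy for how an $S_n$-class of even permutations behaves when restricted to $A_n$: either it remains a single class in $A_n$, or it splits into two $A_n$-classes of equal size, and the split occurs precisely when the cycle type consists of distinct odd parts. In particular, every splitting class consists of even permutations, so all splitting classes appear inside the decomposition of $\kappa_E(S_n)$.

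First I would write $\kappa(A_n)=\sum_{D}(|D|/|A_n|)^2$, where the sum runs over conjugacy classes $D$ of $A_n$, and I would partition this sum into contributions from non-splitting classes (so $D=C$ for some $S_n$-class $C$) and from splitting classes (two classes $D$ of size $|C|/2$ for each splitting $S_n$-class $C$). A non-splitting $C$ contributes $(|C|/|A_n|)^2$, while a splitting $C$ contributes $2\cdot(|C|/(2|A_n|))^2 = \tfrac12(|C|/|A_n|)^2$. In parallel, since given that a uniformly random $\sigma\in S_n$ is even it is uniform on $A_n$, one checks directly that $\kappa_E(S_n)=\sum_{C\text{ even}}(|C|/|A_n|)^2$, where the sum runs over $S_n$-classes of even permutations.

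Comparing the two expressions gives
\[
\kappa(A_n)=\kappa_E(S_n)-\tfrac12\sum_{C\text{ splitting}}\bigl(|C|/|A_n|\bigr)^2.
\]
It then remains to recognize the leftover sum as $4Q(S_n)$. Using $|A_n|=|S_n|/2$, I would rewrite
\[
\sum_{C\text{ splitting}}\bigl(|C|/|A_n|\bigr)^2=4\sum_{C\text{ splitting}}\bigl(|C|/|S_n|\bigr)^2,
\]
and then observe that $\sum_{C\text{ splitting}}(|C|/|S_n|)^2$ is exactly the probability that two independent uniform elements of $S_n$ lie in the same splitting $S_n$-class, i.e.\ have the same cycle type composed of distinct odd-length cycles, which is the definition of $Q(S_n)$. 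Substituting yields $\kappa(A_n)=\kappa_E(S_n)-2Q(S_n)$.

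The computation is essentially bookkeeping, so there is no real obstacle beyond being careful with the factor of $2$ that comes from a splitting class contributing two $A_n$-classes each of size $|C|/2$; this is where the coefficient $2$ in front of $Q(S_n)$ arises (rather than $\tfrac12$ or $4$), and it is the step most easily miscounted. One should also verify the implicit claim that a cycle type with distinct odd parts is automatically even, which is immediate since each odd cycle is an even permutation, to make sure that all splitting classes are correctly attributed to the $\kappa_E(S_n)$ decomposition.
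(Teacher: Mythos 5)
Your proof is correct, but it takes a different route from the paper's. You expand $\kappa(A_n)$ and $\kappa_E(S_n)$ directly as sums of squared class sizes over $|A_n|$, invoke the splitting dichotomy for $S_n$-classes of even permutations, and read off the discrepancy $\tfrac12\sum_{C\text{ splitting}}(|C|/|A_n|)^2 = 2Q(S_n)$ by pure bookkeeping. The paper instead phrases everything in terms of conditional probabilities: it introduces the events $AN$ (both permutations even), $CT$ (same cycle type) and $SP$ (split class), writes $\kappa(A_n)=\mathbf{P}(CT\mid AN)\bigl(1-\tfrac12\mathbf{P}(SP\mid AN\cap CT)\bigr)$, and then computes $\mathbf{P}(SP\mid AN\cap CT)=4Q(S_n)/\kappa_E(S_n)$ using $\mathbf{P}(AN)=\tfrac14$. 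The two arguments rest on exactly the same facts (the splitting criterion, the equal halving of split classes, and $|A_n|=|S_n|/2$ turning $Q(S_n)$ into the factor $4Q(S_n)$), so neither is more general; your version is arguably more transparent about where the coefficient $2$ comes from, since it is visible as $\tfrac12\times 4$ in a single line, whereas the paper's factor emerges from combining two separate conditional-probability computations. Both proofs leave the classical splitting criterion for $A_n$-classes unproved, which is reasonable given that it is standard.
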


\begin{proof}
Let $\sigma, \tau \in S_n$ be chosen uniformly and independently at random. Let

\begin{itemize}
\item AN be the event that both $\sigma$ and $\tau$ lie in $A_n$,
\item CT be the event that they have the same cycle type,
\item SP be the event that they lie in a conjugacy class in $A_n$ which is split, i.e. that they are formed only of parts of odd, distinct length.
\end{itemize}

We note that $\kappa_E(S_n) = \mathbf{P}(CT | AN)$ and that $Q(S_n) = \mathbf{P}(CT \cap SP)$. Moreover,
\[
\kappa(A_n) = \mathbf{P}(CT|AN) \times \left(1-\frac{1}{2}\mathbf{P}(SP|AN \cap CT)\right)
\]

for if the conjugacy class is not split, then they are automatically conjugate in $A_n$ when they have the same cycle type, and if the conjugacy class is split, then the chance they are conjugate is $\frac{1}{2}$ when they have the same cycle type. We have
\[
\mathbf{P}(SP| AN \cap CT) = \frac{\mathbf{P}(SP \cap CT| AN )}{\mathbf{P}(CT|AN)} =  \frac{\mathbf{P}(SP \cap CT| AN )}{\kappa_E(S_n)}.
\]
But SP implies AN, hence
\[
\mathbf{P}(SP \cap CT| AN ) = \frac{\mathbf{P}(SP \cap CT \cap AN)}{\mathbf{P}(AN)} =\frac{\mathbf{P}(SP \cap CT)}{\frac{1}{4}} = 4Q(S_n).  
\]
Thus
\[
\mathbf{P}(SP| AN \cap CT) = \frac{4Q(S_n)}{\kappa_E(S_n)}  
\]
and 
\[
\kappa(A_n)=\kappa_E(S_n) - 2Q(S_n).
\] 
\end{proof}

\begin{proof}[Proof of Theorem \ref{thm1}]

From the above lemma, we have 
\begin{align*}
\lim_{n \to\infty} n^2 \kappa(A_n)
& =\lim_{n\to\infty}\left\{n^2\kappa_E(S_n)-2n^2Q(S_n)\right\}\\
& = \lim_{n\to\infty}\left\{n^2\kappa_E(S_n)\right\} - 2 \lim_{n\to\infty}\left\{n^2Q(S_n)\right\}
\end{align*}
and we can finally conclude that
\[
\lim_{\substack{n \to\infty\\ n \textrm{ even}}} n^2 \kappa(A_n) = A_1-2B_1, 
\]

\[
\lim_{\substack{n \to\infty\\ n \textrm{ odd}}} n^2 \kappa(A_n) = A_2-2B_2.
\]

\end{proof}

\section{Further Remarks}

It is clear that there is still more which can be said about the conjugacy probability of $A_n$, and of other groups. Firstly, it is possible that the methods used by Flajolet \emph{et al} (2006) would allow the explicit calculation the asymptotic values of $\kappa (A_n)$ to greater precision. 

It may also be interesting to consider the conjugacy probability to other families of groups. In particular, the family $\mathrm{GL}(n,q),$ with $q$ fixed and $n$ tending to infinity may be an interesting case to consider.

Lastly, a different but related probability that may be of interest is the probability that one element chosen uniformly at random from $S_n$ belongs to a split conjugacy class in $A_n$ - we called this probability $q(S_n)$. Numerical evidence suggests $n^{-\frac{1}{2}}q(S_n)$ has a limit as $n$ tends to infinity but our brief investigations suggest that this would be harder to prove than the limits treated in this paper.

\section{Acknowledgements}
This work was completed under the supervision of Dr John R. Britnell as part of a UROP placement with the Department of Mathematics at Imperial College London. We would like to thank Dr. Britnell for his constructive suggestions and excellent guidance while we were working on this paper. We would also like to thank the reviewer for their helpful comments.

% BibTeX users please use one of
%\bibliographystyle{spbasic}      % basic style, author-year citations
%\bibliographystyle{spmpsci}      % mathematics and physical sciences
%\bibliographystyle{spphys}       % APS-like style for physics
%\bibliography{}   % name your BibTeX data base

% Non-BibTeX users please use

\end{document}